 \newtheorem{thm}{Theorem}[section]
 \newtheorem{cor}[thm]{Corollary}
 \newtheorem{prop}[thm]{Proposition}
 \theoremstyle{definition}
 \newtheorem{defn}[thm]{Definition}
 \theoremstyle{remark}
 \newtheorem{rem}[thm]{Remark}
 \numberwithin{equation}{section}
\newcommand{\RR}{\mathbb{R}}
\begin{document}

%
%
%
%
%
%
%
%

\title[Harmonic maps and para-Sasakian geometry]
{Harmonic maps and para-Sasakian geometry}

\author[S. K. Srivastava] {S. K. Srivastava}
\address{Department of Mathematics\br
                   Central University of Himachal Pradesh\br
                   Dharamshala-176215\br
                   Himachal Pradesh\br
	       INDIA}
\email{sachin@cuhimachal.ac.in}
\author{K. Srivastava}
\address{Department of Mathematics\\
                    D.D.U. Gorakhpur University\\
                    Gorakhpur-273009\\
                    Uttar Pradesh\\
                    INDIA}
\email{ksriddu22@gmail.com}
 
\thanks {S. K. Srivastava: partially supported through the UGC-BSR Start-Up-Grant vide
their letter no. F.30-29/2014(BSR). K. Srivastava: supported by the Department of Science and Technology, India through the JRF [IF140491] DST/INSPIRE/03/2014/001552.} 

\begin{abstract}
The purpose of this paper is to study the harmonicity of maps to or from para-Sasakian manifolds. We derive the condition for the tension field of paraholomorphic map between almost para-Hermitian manifold and para-Sasakian manifold. The necessary and sufficient condition for a paraholomorphic map between para-Sasakian manifolds to be parapluriharmonic are shown and a non-trivial example is presented for its illustrations. 
\end{abstract}
\subjclass{53C25, 53C43, 53C56, 53D15, 58C10}
\keywords{Harmonic maps, paraholomorphic maps, paracomplex manifolds, paracontact manifolds}


\maketitle


\section{Introduction}
The study of harmonic maps was initiated by F. B. Fuller, J. Nash and J. H. Sampson \cite{EL,MS} while the first general result on the existence of harmonic maps is due to Eells-Sampson \cite{eells}. Harmonic maps are extrema (critical points) of the energy functional defined on the space of smooth maps between Riemannian (pseudo-Riemannian) manifolds. The trace of the second fundamental form of such maps vanishes. 

More precisely, let $(M_{i},g_{i})$, $i\in\{1,2\}$ be pseudo-Riemannian manifolds and $\Gamma (TM_{i})$ denotes the sections of the tangent bundle $TM_{i}$ of $M_i$,   that is, the space of vector fields on $M_i$. Then {\it energy} $E(f)$ of a smooth map $f:(M_{1},g_{1})\to (M_{2},g_{2})$ is defined by the formula
\begin{align}
E(f)=\int_{M_{1}} e(f)\mathcal{V}_{g_{1}},
\end{align}
where $\mathcal{V}_{g_{1}}$ is the volume measure associated to the metric $g_{1}$ and the {\it energy density} $e(f)$ of $f$ is the smooth function $e(f):M_{1}\to [0,\infty)$ given by
\begin{align}\label{efp}
e(f)_{p}=\frac{1}{2} {\|{f_{*}\|}}^{2}=\frac{1}{2}Tr_{g_{1}}(f^{*}g_{2})(p),
\end{align}
for each $p\in M_{1}$. In the above equation $f_{*}$ is a linear map $f_{*}:\Gamma(TM_{1})\to \Gamma_{f}(TM_{2})$ therefore it can be considered as a section of the bundle $$TM_{1}\otimes f^{-1}(TM_{2})\to M_{1},$$ where $f^{-1}(TM_{2})$ is the pullback bundle having fibres $(f^{-1}(TM_{2}))_{p}=T_{f(p)}M_{2}$, $p\in M_{1}$ and $f^{*}g_{2}$ is the pullback metric on $M_1$. If we denote by $\nabla$ and $\overline{\nabla}$ the Levi-Civita connections on $M_{1}$ and $M_{2}$ respectively, then the second fundamental form of $f$ is the symmetric map $\alpha_{f}:\Gamma(TM_{1})\times \Gamma(TM_{2})\to \Gamma_{f}(TM_{2})$ defined by
\begin{align}\label{ap1}
\alpha_{f}(X,Y)={\widetilde{\nabla}}_{X}f_{*}Y-f_{*}\nabla_{X}Y,
\end{align}
for any $X,\,Y\in \Gamma(TM_{1})$. Where $\widetilde{\nabla}$ is the pullback of the Levi-Civita connection $\overline{\nabla}$ of $M_{2}$ to the induced vector bundle $f^{-1}(TM_{2}):\widetilde{\nabla}_{X}f_{*}Y=\overline{\nabla}_{f_{*}X}f_{*}Y$. The section $\tau(f)\in \Gamma(f^{-1}(TM_{2}))$, defined by 
\begin{align}\label{tauf}
\tau(f)=Tr_{g_{1}}\alpha_{f}\end{align} 
is called the {\it tension field} of $f$ and a map is said to be harmonic if its tension field vanishes identically (see \cite{CI,kupeli}).

If we consider $\{f_{s,t}\}_{s,t\in (-\epsilon,\,\epsilon)}$ a smooth two-parameter variation of $f$ such that $f_{0,0}=f$ and let $V,\,W\in \Gamma(f^{-1}(TM_{2}))$ be the corresponding variational vector fields then
\begin{align*}
V=\frac{\partial}{\partial s}\bigg(f_{s,t}\bigg)\bigg|_{(s,t)=(0,0)},\,\,\,\,\,\,\,W=\frac{\partial}{\partial t}\bigg(f_{s,t}\bigg)\bigg|_{(s,t)=(0,0)}.
\end{align*}
The {\it{Hessian}} of a harmonic map $f$ is defined by:
\begin{align*}
H_{f}(V,W)=\frac{\partial^{2}}{\partial s \partial t}\bigg(E(f_{s,t})\bigg)\bigg|_{(s,t)=(0,0)}.
\end{align*}

The index of a harmonic map $f:(M_{1},g_{1})\to(M_{2},g_{2})$ is defined as the dimension of the tangent subspace of $\Gamma(f^{-1}(TM_{2}))$ on which the Hessian $H_{f}$ is negative definite. A harmonic map $f$ is said to be {\it stable} if Morse index ({\emph{i.e.}}, the dimension of largest subspace of $\Gamma(f^{-1}(TM_2))$ on which the Hessian $H_{f}$ is negative definite) of $f$ is zero and otherwise, it is said to be {\it unstable} (see \cite{gherghe,ianus}). For a non-degenerate point $p\in M_1$, we decompose the space $T_{p}M_{1}$ into its {\it vertical space} $\nu_{p}=\ker f_{*p}$ and its {\it horizontal space} $\mathcal{H}_{p}=(\ker f_{*p})^{\perp}$, that is, $\mathcal{H}_{p}=\nu_{p}^{\perp}$, so that $T_{p}M_{1}=\nu_{p}\oplus \mathcal{H}_{p}$. The map is said to be horizontally conformal if for each $p\in M_{1}$ either the rank of $f_{*p}$ is zero (that is, $p$ is a critical point), or the restriction of $f_{*p}$ to the horizontal space $\mathcal{H}_{p}$ is surjective and conformal (here $p$ is a regular point) \cite{chinea, kupeli}.

The premise of harmonic maps has acknowledged several important contributions and has been successfully applied in computational fluid dynamics (CFD), minimal surface theory, string theory, gravity and quantum field theory  (see \cite{cai, KJJ, MT, NS}). Most of works on harmonic maps are between Riemannian manifolds \cite{hmb}. The harmonic maps between pseudo-Riemannian manifolds behave differently and their study must be subject to some restricted classes of pseudo-Riemannian manifolds \cite{klh}.

This paper is organized as follows. In Sect. \ref{prem}, the basic definitions about almost para-Hermitian manifolds, almost paracontact manifolds and normal almost paracontact manifolds are given. In Sect. \ref{paraholo}, we define and study paraholomorphic map. We prove that the tension field of any $(J,\phi)$-paraholomorphic map between almost para-Hermitian manifold and para-Sasakian manifold lies in $\Gamma(D_1)$. Sect. \ref{parap} deals with parapluriharmonic map in which we obtain the necessary and sufficient condition for a $(\phi_{1},\phi_{2})$-paraholomorphic map between para-Sasakian manifolds to be $\phi_{1}$-parapluriharmonic and give an example for its illustrations. 
\section{Preliminaries}\label{prem}
\subsection{Almost para-complex manifolds}
A smooth manifold $N^{2m}$ of dimension $2m$ is said to be an almost product structure if it admits a tensor field $J$ of type $(1,1)$ satisfying:
\begin{align}\label{j2}
J^2 =Id.
\end{align}
In this case the pair $(N^{2m},J)$ is called an almost product manifold. An almost para-complex manifold is an almost product manifold $(N^{2m}, J)$ such that the eigenbundles $T^{\pm}N^{2m}$ associated with the eigenvalues $\pm 1$ of tensor field $J$ have the same rank \cite{gund}. An almost para-Hermitian manifold $N^{2m}(J,h)$ is a smooth manifold endowed with an almost para-complex structure $J$ and a pseudo-Riemannian metric $h$ compatible in the sense that
\begin{align}\label{gjx}
h(JX,Y)=-h(X, JY),\qquad \forall\,\,X,\,Y\in\Gamma\big(TN^{2m}\big).
\end{align}  
It follows that the metric $h$ has signature $(m,m)$ and the eigenbundles $T^{\pm}N^{2m}$ are totally isotropic with respect to $h$. Let $\{e'_1 , \cdots , e'_m , e'_{m+1}=Je'_{1},\cdots, e'_{2m}=Je'_{m}\}$ be an orthonormal basis and denote $\epsilon'_{i}=g(e'_{i},e'_{i})=\pm 1$:  $\epsilon'_{i}=1$ for $i=1,\cdots, m$ and $\epsilon'_{i}=-1$ for $i=m+1,\cdots, 2m$. The fundamental $2$-form of almost para-Hermitian manifold is defined by
\begin{align}\label{capphi}
\Phi(X,Y)=h(JX,Y)
\end{align}
and the co-differential $\delta\Phi$ of $\Phi$ is given as follows
\begin{align}
(\delta\Phi)(X)=\sum_{i=1}^{2m}\epsilon'_{i}\left(\nabla_{e'_i}\Phi\right)(e'_i , X).
\end{align}
\noindent An almost para-Hermitian manifold is called para-K\"{a}hler if $\nabla J=0 $ \cite{gund}.
\subsection{Almost paracontact metric manifolds}
A $C^{\infty}$ smooth manifold $M^{2n+1}$ of dimension $(2n+1)$ is said to have a triplet $(\phi,\xi,\eta)$-structure if it admits an endomorphism $\phi$, a unique vector field $\xi$  and a $1$-form $\eta$ satisfying: 
\begin{align}\label{eta}
\phi^2& = Id-\eta\otimes\xi \quad {\rm\and}\quad \eta\left(\xi\right)=1,
\end{align}
where $Id$ is the identity transformation; and the endomorphism $\phi$ induces an almost paracomplex structure on each fibre of $\ker\eta,$ the contact subbundle, {\emph{i.e.}}, eigen distributions  ${(\ker\eta)}^{\pm 1}$ corresponding to the characteristic values $\pm 1$ of $\phi$ have equal dimension $n$.\\
From the equation (\ref{eta}), it can be easily deduced that 
\begin{align}\label{phixi}
\phi\xi = 0, \quad \eta\circ\phi = 0 \quad {\rm and \quad rank}(\phi)=2n.
\end{align}
This triplet structure $(\phi,\xi,\eta)$ is called an almost paracontact structure and the manifold $M^{2n+1}$ equipped with the $(\phi,\xi,\eta)$-structure is called an almost paracontact manifold (see also \cite{skm,gn,ks1}). If an almost paracontact manifold admits a pseudo-Riemannian metric $g$ satisfying: 
\begin{align}\label{gphi}
g(\phi X,\phi Y)=-g(X,Y)+\eta(X)\eta(Y),
\end{align}
where signature of $g$ is necessarily $(n+1,\,n)$ for any vector fields $X$ and $Y$; then the quadruple $(\phi,\xi,\eta,g)$ is called an almost paracontact metric structure and the manifold $M^{2n+1}$ equipped with paracontact metric structure is called an almost paracontact metric manifold. With respect to $g$, $\eta$ is metrically dual to $\xi$, that is
\begin{align}\label{gx}
g(X,\xi)=\eta(X). 
\end{align}
Also, equation (\ref{gphi}) implies that
\begin{align}\label{gphix}
g(\phi X,Y)=-g(X,\phi Y).
\end{align}
Further, in addition to the above properties, if the structure-$(\phi,\xi,\eta,g)$ satisfies:
\begin{align}
d\eta(X,Y)=g(X,\phi Y), \nonumber
\end{align} 
for all vector fields $X,\,Y$ on $M^{2n+1}$, then the manifold is called a paracontact metric manifold and the corresponding structure-$(\phi,\xi,\eta,g)$ is called a paracontact structure with the associated metric $g$ \cite{sz}. For an almost paracontact metric manifold, there always exists a special kind of local pseudo-orthonormal basis $\{X_{i}, X_{i^*}, \xi\}$;  where $X_{i^*}=\phi X_{i};$ $\xi$ and $X_{i}$'s are space-like vector fields and $X_{i^*}$'s  are time-like. Such a basis is called a $\phi$-basis. Hence, an almost paracontact metric manifold $M^{2n+1}(\phi,\xi,\eta,g)$ is an odd dimensional manifold with a structure group $\mathbb{U}(n,\mathbb{R})\times Id$, where $\mathbb{U}(n,\mathbb{R})$ is the para-unitary group isomorphic to $\mathbb{G}\mathbb{L}(n,\mathbb{R})$.

\noindent An almost paracontact metric structure-$(\phi,\xi,\eta,g)$ is para-Sasakian if and
only if 	
\begin{align}\label{paras}
(\nabla_{X}\phi) Y=-g(X,Y)\xi+\eta(Y)X.
\end{align}
From Eqs. \eqref{phixi}, \eqref{gphix} and \eqref{paras}, it can be easily deduced for a para-Sasakian manifold that
\begin{equation}\label{nablax}
\nabla_{X}\xi = -\phi X,\quad \nabla_{\xi}\xi=0.
\end{equation}
In particular, a para-Sasakian manifold is $K$-paracontact \cite{sz}.

\subsection{Normal almost paracontact metric manifolds}  
On an almost paracontact metric manifold, one defines the $(1,2)$-tensor field $N_\phi$ by
\begin{align}\label{n}
N_{\phi}:=[\phi,\phi]-2\,d\eta\otimes\xi,
\end{align}
where $[\phi,\phi]$ is the Nijenhuis torsion of $\phi$. If $N_\phi$ vanishes identically, then we say that the manifold $M^{2n+1}$ is a normal almost paracontact metric manifold \cite{sk, sz}. The normality condition implies that the almost paracomplex structure $J$ defined on $M^{2n+1}\times\mathbb{R}$ by
\begin{align}
J\Bigg(X,\lambda\frac{d}{dt}\Bigg)=\Bigg(\phi X+\lambda\xi,\eta(X)\frac{d}{dt}\Bigg)\nonumber
\end{align}
is integrable. Here $X$ is tangent to $M^{2n+1}$, $t$ is the coordinate on $\mathbb{R}$ and $\lambda$ is a $C^\infty$ function on $M^{2n+1}\times\mathbb{R}$.   
Now we recall the following proposition which characterized the normality of almost paracontact metric $3$-manifolds:
\begin{prop}\cite{jwleg}
For an almost paracontact metric $3$-manifold $M^3$, the following three conditions are mutually equivalent
\begin{itemize}
\item[(i)] $M^3$ is normal,
\item[(ii)] there exist smooth functions $p,\,q$ on $M^3$ such that
\begin{align}\label{nablaxphiy}
(\nabla_{X}\phi)Y=q(g(X,Y)\xi-\eta(Y)X)+p(g(\phi X,Y)\xi-\eta(Y)\phi X),
\end{align}
\item[(iii)] there exist smooth functions $p,\,q$ on $M^3$ such that
\begin{align}\label{nablaxxi}
\nabla_{X}\xi=p(X-\eta(X)\xi)+q\phi X,
\end{align}
\end{itemize}
where $\nabla$ is the Levi-Civita connection of the pseudo-Riemannian metric $g$. 
\end{prop}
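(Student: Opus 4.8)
The plan is to establish $\mathrm{(ii)}\Rightarrow\mathrm{(iii)}$ together with the cycle $\mathrm{(i)}\Rightarrow\mathrm{(iii)}\Rightarrow\mathrm{(ii)}\Rightarrow\mathrm{(i)}$, after recording four facts that hold on every almost paracontact metric manifold. (a) Differentiating $g(\phi Y,Z)=-g(Y,\phi Z)$ of \eqref{gphix} along $X$ and using metricity of $\nabla$ gives $g\big((\nabla_X\phi)Y,Z\big)=-g\big(Y,(\nabla_X\phi)Z\big)$, so $\nabla\phi$ is $g$--skew in its last two arguments. (b) Since $\phi\xi=0$ by \eqref{phixi}, $(\nabla_X\phi)\xi=\nabla_X(\phi\xi)-\phi\nabla_X\xi=-\phi\nabla_X\xi$, and $g(\nabla_X\xi,\xi)=\tfrac12X\big(g(\xi,\xi)\big)=0$ by \eqref{eta} and \eqref{gx}, so $\nabla_X\xi\in\Gamma(\ker\eta)$. (c) Differentiating $\phi^{2}=Id-\eta\otimes\xi$ of \eqref{eta}, and using $(\nabla_X\eta)(Y)=g(\nabla_X\xi,Y)$, one gets $(\nabla_X\phi)(\phi Y)+\phi\big((\nabla_X\phi)Y\big)=-g(Y,\nabla_X\xi)\,\xi-\eta(Y)\,\nabla_X\xi$. (d) Writing the Lie brackets in \eqref{n} as $[U,V]=\nabla_UV-\nabla_VU$ and cancelling the terms containing $\phi^{2}$ gives
\[
[\phi,\phi](X,Y)=(\nabla_{\phi X}\phi)Y-(\nabla_{\phi Y}\phi)X+\phi\big((\nabla_Y\phi)X\big)-\phi\big((\nabla_X\phi)Y\big),
\]
together with $2\,d\eta(X,Y)=g(\nabla_X\xi,Y)-g(\nabla_Y\xi,X)$.

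For $\mathrm{(ii)}\Rightarrow\mathrm{(iii)}$ I would put $Y=\xi$ in \eqref{nablaxphiy}: by \eqref{phixi} and \eqref{gx} the left side is $-\phi\nabla_X\xi$ and the right side is $-q(X-\eta(X)\xi)-p\,\phi X$; applying $\phi$ once more and using \eqref{eta} and $\eta(\nabla_X\xi)=0$ yields exactly \eqref{nablaxxi}. The reverse implication $\mathrm{(iii)}\Rightarrow\mathrm{(ii)}$ is where $\dim M^{3}=3$ is indispensable. Working pointwise, $\ker\eta$ is $2$--dimensional, carries a metric of signature $(1,1)$, and $\phi|_{\ker\eta}$ is a $g$--skew involution of it; hence the $g$--skew endomorphisms of $\ker\eta$ form the line $\RR\,\phi|_{\ker\eta}$. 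For fixed $X$, fact (a) shows that the $\ker\eta$--into--$\ker\eta$ block of $\nabla_X\phi$ is $g$--skew, so it equals $c(X)\,\phi$ on $\ker\eta$ for some scalar $c(X)$; substituting this into fact (c) with $Y\in\ker\eta$ forces $2c(X)Y=0$, i.e. $c(X)=0$. Therefore $\nabla\phi$ is completely recovered from $\nabla\xi$: combining (a), (b) and $c(X)=0$ yields $(\nabla_X\phi)Y=-g(\phi Y,\nabla_X\xi)\,\xi-\eta(Y)\,\phi\nabla_X\xi$, and inserting $\nabla_X\xi=p(X-\eta(X)\xi)+q\phi X$ and simplifying with \eqref{eta}, \eqref{gx}, \eqref{gphix} reproduces \eqref{nablaxphiy} verbatim.

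For $\mathrm{(ii)}\Rightarrow\mathrm{(i)}$ I would substitute \eqref{nablaxphiy} into the bracket formula of (d): all terms that are not multiples of $\xi$ cancel via \eqref{phixi}, \eqref{gx}, \eqref{gphix}, the survivor being $[\phi,\phi](X,Y)=2q\,g(\phi X,Y)\,\xi$; and since \eqref{nablaxxi} (already forced by \eqref{nablaxphiy}) gives $2\,d\eta(X,Y)=2q\,g(\phi X,Y)$, equation \eqref{n} yields $N_\phi=0$. Finally, for $\mathrm{(i)}\Rightarrow\mathrm{(iii)}$ I would exploit the recovery $(\nabla_X\phi)Y=-g(\phi Y,\nabla_X\xi)\xi-\eta(Y)\phi\nabla_X\xi$ proved above, which turns $N_\phi=0$ (through (d)) into a purely algebraic equation for the $(1,1)$--tensor $X\mapsto\nabla_X\xi$; decomposing this tensor on $\ker\eta$ into a trace part $p\cdot Id$, a traceless $g$--symmetric part, and its $g$--skew part $q\,\phi|_{\ker\eta}$, plus its value $\nabla_\xi\xi\in\ker\eta$, and testing $N_\phi=0$ on the pairs $(X_1,X_{1^*})$, $(X_1,\xi)$, $(X_{1^*},\xi)$ of a local $\phi$--basis $\{X_1,\,X_{1^*}=\phi X_1,\,\xi\}$, I expect the traceless symmetric part and $\nabla_\xi\xi$ to be forced to vanish, leaving \eqref{nablaxxi}. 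I anticipate the genuine obstacle to be exactly this last step together with the dimension-three recovery of $\nabla\phi$ from $\nabla\xi$ that underpins it; the remaining implications are routine manipulations with \eqref{eta}, \eqref{phixi} and \eqref{gphix}.
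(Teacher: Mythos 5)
The paper does not prove this proposition at all: it is quoted verbatim from We{\l}yczko \cite{jwleg}, so there is no in-paper argument to measure yours against; I can only assess the proposal on its own terms, and it is correct. Your preliminary facts (a)--(d) are all valid, the implications (ii)$\Rightarrow$(iii), (iii)$\Rightarrow$(ii) and (ii)$\Rightarrow$(i) are carried out accurately, and you have put your finger on the one genuinely dimension-dependent point: on a $3$-manifold the $g$-skew endomorphisms of the signature-$(1,1)$ plane $\ker\eta$ form the line $\RR\,\phi|_{\ker\eta}$, so (a) and (c) force the universal identity $(\nabla_{X}\phi)Y=-g(\phi Y,\nabla_{X}\xi)\xi-\eta(Y)\phi\nabla_{X}\xi$ on \emph{every} almost paracontact metric $3$-manifold (a useful consistency check: inserting $\nabla_{X}\xi=-\phi X$ returns \eqref{paras}). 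The single step you leave as an ``expectation'', namely (i)$\Rightarrow$(iii), does close exactly as you anticipate: writing $A=\nabla\xi$ and substituting the universal identity into your expression (d) for $[\phi,\phi]$, the condition $N_{\phi}=0$ reduces to the two algebraic conditions $g(A\xi,\cdot)=0$ (from the $\xi$-component tested on the pairs $(X,\xi)$) and $\phi A\phi=A$ on $\ker\eta$ (from the $\ker\eta$-component), i.e. $\nabla_{\xi}\xi=0$ and $A|_{\ker\eta}$ commutes with $\phi|_{\ker\eta}$; the commutant of $\phi|_{\ker\eta}$ is precisely ${\rm span}\{Id,\phi\}$, which is the vanishing of the traceless $g$-symmetric part, while the pair $(X_{1},X_{1^{*}})$ contributes nothing new because the skew part of the bilinear form $g(\cdot,A\cdot)$ on $\ker\eta$ is automatically proportional to $g(\phi\cdot,\cdot)$, which already anticommutes with $\phi$ in both slots. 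Hence $A=p(Id-\eta\otimes\xi)+q\phi$, which is \eqref{nablaxxi}. The only stylistic remark is that your recovery of $\nabla\phi$ from $\nabla\xi$ uses none of the hypotheses (i)--(iii), so it would be cleaner to state it once as a standalone lemma rather than derive it inside the (iii)$\Rightarrow$(ii) step and then silently reuse it for (i)$\Rightarrow$(iii).
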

\noindent The functions $p,\,q$ appearing in Eqs. (\ref{nablaxphiy}) and (\ref{nablaxxi}) are given by    
\begin{align}\label{2alpha}
2p=trace\left\{X\rightarrow\nabla_{X}\xi\right\}, \quad 2q=trace\left\{X\rightarrow\phi\nabla_{X}\xi\right\}.
\end{align}
\noindent A normal almost paracontact metric $3$-manifold is called paracosymplectic if $p=q=0$ and para-Sasakian if $p=0,q=-1$ \cite{ks1}. 
\section{Paraholomorphic map}\label{paraholo}
One can look structure preserving mapping between almost para-Hermitian and almost paracontact manifolds as analogous of the well-known holomorphic mappings in complex geometry \cite{boeckx, calin}.\\
\begin{defn} Let $M_{i}^{2n_{i}+1}(\phi_{i},\xi_{i},\eta_{i},g_{i})$, $i\in\{1,2\}$ be almost paracontact metric manifolds and $N^{2m}(J,h)$ be an almost para-Hermitian manifold. Then a smooth map   
\begin{itemize}
\item[1.] {$f:M_{1}^{2n_{1}+1}\to N^{2m}$ is $(\phi_{1},J)$-paraholomorphic map if $f_{*}\circ \phi_{1}=J\circ f_{*}$. For such a map $f_{*}\xi_{1}=0$.}
\item[2.] {$f:N^{2m}\to M_{1}^{2n_{1}+1}$ is $(J,\phi_{1})$-paraholomorphic map if $f_{*}\circ J=\phi_{1}\circ f_{*}$. Here ${\rm Im} f_{*}\perp \xi_{1}$.}
\item[3.] {$f:M_{1}^{2n_{1}+1}\to M_{2}^{2n_{2}+1}$ is $(\phi_{1},\phi_{2})$-paraholomorphic map if $f_{*}\circ \phi_{1}=\phi_{2}\circ f_{*}$. In particular, $f_{*}(\xi_{1}^{\perp})\subset \xi_{2}^{\perp}$ and $f_{*}(\xi_{1})\sim \xi_{2}$.}
\end{itemize}
When $f_{*}$ interwines the structures upto a minus sign, we say about $(\phi_{1},J)$-anti paraholomorphic, $(J,\phi_{1})$-anti paraholomorphic and $(\phi_{1},\phi_{2})$-anti paraholomorphic mappings.
\end{defn}
\noindent
Now, we prove the following result.
\begin{prop}
Let $f$ be a smooth $(\phi_{1},\phi_{2})$-paraholomorphic map between para-Sasakian manifolds $M_{i}^{2n_i +1}(\phi_{i},\xi_{i},\eta_{i},g_{i})$, $i\in\{1,2\}$. Then
\begin{align}\label{p2}
\phi_{2}(\tau(f))=f_{*}({\rm div}\phi_{1})-Tr_{g_{1}}\beta,
\end{align}
where $\beta(X,Y)=\big(\widetilde{\nabla}_{X}\phi_{2}\big)(f_{*}Y),\qquad \forall\,\,X,\,Y\in\Gamma\big(TM_{1}^{2n_1 +1}\big)$.
\end{prop}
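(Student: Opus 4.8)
The plan is to compute $\phi_{2}(\tau(f))$ directly from the definitions \eqref{ap1}--\eqref{tauf}, using the paraholomorphicity relation $f_{*}\circ\phi_{1}=\phi_{2}\circ f_{*}$ to commute $\phi_{2}$ past $f_{*}$, and a Leibniz identity to commute $\phi_{2}$ past the pullback connection $\widetilde{\nabla}$ at the price of the correction term $\beta$.

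I would begin with a local pseudo-orthonormal frame $\{E_{i}\}_{i=1}^{2n_{1}+1}$ on $M_{1}^{2n_{1}+1}$, $\epsilon_{i}=g_{1}(E_{i},E_{i})=\pm1$, so that $\tau(f)=\sum_{i}\epsilon_{i}\alpha_{f}(E_{i},E_{i})$. For each $i$, $\phi_{2}\alpha_{f}(E_{i},E_{i})=\phi_{2}\widetilde{\nabla}_{E_{i}}f_{*}E_{i}-\phi_{2}f_{*}\nabla_{E_{i}}E_{i}$. Paraholomorphicity gives $\phi_{2}f_{*}\nabla_{E_{i}}E_{i}=f_{*}\phi_{1}\nabla_{E_{i}}E_{i}$, while $(\widetilde{\nabla}_{X}\phi_{2})V=\widetilde{\nabla}_{X}(\phi_{2}V)-\phi_{2}\widetilde{\nabla}_{X}V$ together with paraholomorphicity gives $\phi_{2}\widetilde{\nabla}_{E_{i}}f_{*}E_{i}=\widetilde{\nabla}_{E_{i}}(f_{*}\phi_{1}E_{i})-\beta(E_{i},E_{i})$. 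Expanding $\widetilde{\nabla}_{E_{i}}(f_{*}\phi_{1}E_{i})=\alpha_{f}(E_{i},\phi_{1}E_{i})+f_{*}\nabla_{E_{i}}(\phi_{1}E_{i})$ and $\nabla_{E_{i}}(\phi_{1}E_{i})=(\nabla_{E_{i}}\phi_{1})(E_{i})+\phi_{1}\nabla_{E_{i}}E_{i}$, the two copies of $f_{*}\phi_{1}\nabla_{E_{i}}E_{i}$ cancel, and summing over $i$ with the weights $\epsilon_{i}$ yields
\begin{align*}
\phi_{2}(\tau(f))=\sum_{i}\epsilon_{i}\alpha_{f}(E_{i},\phi_{1}E_{i})+f_{*}\Big(\sum_{i}\epsilon_{i}(\nabla_{E_{i}}\phi_{1})(E_{i})\Big)-\sum_{i}\epsilon_{i}\beta(E_{i},E_{i}),
\end{align*}
where the middle summand is $f_{*}({\rm div}\,\phi_{1})$ by definition of the divergence of the $(1,1)$-tensor $\phi_{1}$, and the last is $Tr_{g_{1}}\beta$.

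It remains to kill the extra term $\sum_{i}\epsilon_{i}\alpha_{f}(E_{i},\phi_{1}E_{i})$, which is the only step needing an idea. Writing $\phi_{1}E_{i}=\sum_{k}\epsilon_{k}g_{1}(\phi_{1}E_{i},E_{k})E_{k}$, this term equals $\sum_{i,k}\epsilon_{i}\epsilon_{k}\,g_{1}(\phi_{1}E_{i},E_{k})\,\alpha_{f}(E_{i},E_{k})$; since $\alpha_{f}$ is symmetric in its two arguments while $(X,Y)\mapsto g_{1}(\phi_{1}X,Y)$ is skew-symmetric by \eqref{gphix}, the double sum vanishes. (Equivalently, one may evaluate on a $\phi$-basis $\{X_{a},\phi_{1}X_{a},\xi_{1}\}$ and use $\phi_{1}\xi_{1}=0$, $\phi_{1}^{2}X_{a}=X_{a}$ and the opposite signs $\epsilon(X_{a})=1$, $\epsilon(\phi_{1}X_{a})=-1$.) Substituting back gives \eqref{p2}. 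I expect the only delicate points to be the bookkeeping in the Leibniz steps so that the $f_{*}\phi_{1}\nabla_{E_{i}}E_{i}$ contributions cancel, and recognizing the residual sum as a symmetric-times-skew contraction; beyond the almost paracontact metric compatibility \eqref{gphix}, this argument does not even invoke the para-Sasakian condition.
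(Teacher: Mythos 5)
Your proof is correct and is essentially the paper's own argument: the paper differentiates the relation $f_{*}\circ\phi_{1}=\phi_{2}\circ f_{*}$ to obtain the pointwise identity $\phi_{2}(\alpha_{f}(X,Y))+\big(\widetilde{\nabla}_{X}\phi_{2}\big)(f_{*}Y)=f_{*}((\nabla\phi_{1})(X,Y))+\alpha_{f}(X,\phi_{1}Y)$ and then takes the trace, which is exactly your frame-wise computation. Your explicit check that $\sum_{i}\epsilon_{i}\alpha_{f}(E_{i},\phi_{1}E_{i})=0$ (a symmetric tensor contracted against a skew one) just fills in the step the paper compresses into ``using the fact that $\alpha_{f}$ is symmetric,'' and your remark that the para-Sasakian hypothesis is never actually invoked is accurate.
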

\begin{proof}
Since $f_{*}$ has values in $f^{-1}\big(TM_{2}^{2n_2 +1}\big)$ so that $f_{*}\circ \phi_{1}$ and $\phi_{2}\circ f_{*}$ have values in $f^{-1}\big(TM_{2}^{2n_2 +1}\big)$. Thus, we have
\begin{align}\label{wd}
\big(\widetilde{\nabla}(f_{*}\circ \phi_{1})\big)(X,Y)&=\widetilde{\nabla}_{X}f_{*}(\phi_{1}Y)-(f_{*}\circ \phi_{1})(\nabla_{X}Y) \nonumber
\\
&=\big(\widetilde{\nabla}_{X}f_{*}\big)(\phi_{1}Y)+f_{*}(\nabla_{X}\phi_{1}Y)-(f_{*}\circ \phi_{1})(\nabla_{X}Y) \nonumber
\\
&=\alpha_{f}(X,\phi_{1}Y)+f_{*}((\nabla \phi_{1})(X,Y)).
\end{align}
In the last equality, we have used \eqref{ap1}. On the other hand, we obtain
\begin{align}\label{wde}
\big(\widetilde{\nabla}(\phi_{2}\circ f_{*})\big)(X,Y)&=\widetilde{\nabla}_{X}\phi_{2}(f_{*}Y)-(\phi_{2}\circ f_{*})(\nabla_{X}Y) \nonumber
\\
&=\big(\widetilde{\nabla}_{X}\phi_{2}\big)(f_{*}Y)+\phi_{2}\big(\widetilde{\nabla}_{X}f_{*}Y\big)-\phi_{2}(f_{*}(\nabla_{X}Y)) \nonumber
\\
&=\big(\widetilde{\nabla}_{X}\phi_{2}\big)(f_{*}Y)+\phi_{2}(\alpha_{f}(X,Y)).
\end{align}
From Eqs. \eqref{wd} and \eqref{wde}, we have
\begin{align}\label{phi2a}
\phi_{2}(\alpha_{f}(X,Y))+\big(\widetilde{\nabla}_{X}\phi_{2}\big)(f_{*}Y)=f_{*}((\nabla \phi_{1})(X,Y))+\alpha_{f}(X,\phi_{1}Y).
\end{align}
Let $\{e_{1},e_{2},\cdots,e_{n_1},\phi_{1}e_{1},\phi_{1} e_{2},\cdots,\phi_{1}e_{n_1},\xi_{1}\}$ be a local orthonormal frame for \linebreak $TM_{1}^{2n_1 +1}$. Taking the trace in \eqref{phi2a} and using the fact that $\alpha_{f}$ is symmetric, we have \eqref{p2}. This completes the proof.
\end{proof}
\noindent
Following the proof of the above proposition, we can give the following remarks:
\begin{rem}
For a para-Sasakian manifold $M_{1}^{2n_{1} +1}(\phi_{1},\xi_{1},\eta_{1},g_{1})$ and a para-\linebreak Hermitian manifold $N^{2m}(J,h)$. If  
\begin{itemize}
\item[(a)] $f:M_{1}^{2n_{1}+1}\to N^{2m}$ be a $(\phi_{1},J)$-paraholomorphic map then we have
\begin{align}\label{jtau}
J(\tau(f))=f_{*}{\rm div}\phi_{1}-Tr_{g_{1}}\beta',
\end{align}
where $\beta'(X,Y)=\big(\widetilde{\nabla}_{X}J\big)(f_{*}Y),\qquad \forall\,\,X,\,Y\in\Gamma\big(TM_{1}^{2n_{1}+1}\big)$.\\
\item[(b)] $f:N^{2m}\to M_{1}^{2n_{1}+1}$ be a $(J,\phi_{1})$-paraholomorphic map then we have
\begin{align}\label{ptau}
\phi_{1}(\tau(f))=f_{*}{\rm div}J-Tr_{h}\beta'',
\end{align}
where $\beta''(X,Y)=\big(\widetilde{\nabla}_{X}\phi_{1}\big)(f_{*}Y),\qquad \forall\,\,X,\,Y\in\Gamma\big(TN^{2m}\big)$.\end{itemize}
\end{rem}
\begin{thm}\label{t1}
Let $f$ be a $(\phi_{1},J)$-paraholomorphic map between para-Sasakian manifold $M_{1}^{2n_{1}+1}(\phi_{1},\xi_{1},\eta_{1},g_{1})$ and para-K$\ddot{a}$hler manifold $N^{2m}(J,h)$. Then $f$ is harmonic.
\end{thm}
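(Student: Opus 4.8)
The plan is to specialize the identity \eqref{jtau} from Remark~(a) to the para-K\"ahler case and show that both terms on the right-hand side vanish, so that $J(\tau(f))=0$, and then conclude $\tau(f)=0$ because $J$ is invertible (indeed $J^{2}=\mathrm{Id}$ by \eqref{j2}). Since $N^{2m}(J,h)$ is para-K\"ahler we have $\nabla J=0$, hence the pullback derivative satisfies $\big(\widetilde{\nabla}_{X}J\big)(f_{*}Y)=0$ for all $X,Y$; this kills the term $Tr_{g_{1}}\beta'$ immediately. So the real content is to show that $f_{*}(\mathrm{div}\,\phi_{1})=0$.

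First I would compute $\mathrm{div}\,\phi_{1}$ on the para-Sasakian manifold $M_{1}$ using the $\phi$-basis $\{e_{a},\phi_{1}e_{a},\xi_{1}\}$. By definition $\mathrm{div}\,\phi_{1}=\sum \varepsilon_{i}(\nabla_{E_{i}}\phi_{1})(E_{i})$, and from \eqref{paras} we have $(\nabla_{X}\phi_{1})X=-g_{1}(X,X)\xi_{1}+\eta_{1}(X)X$. Summing over the $\phi$-basis, the vector-field contributions $\eta_{1}(E_{i})E_{i}$ contribute only through $\xi_{1}$ itself (since $\eta_{1}$ annihilates the contact distribution), giving $\xi_{1}$, while the scalar terms $-g_{1}(E_{i},E_{i})\xi_{1}$ sum with signs $\varepsilon_{i}$ to $-(\dim M_{1}-? )\,\xi_{1}$; keeping careful track of the signature $(n_{1}+1,n_{1})$ and the fact that each $\phi_{1}e_{a}$ is timelike, the upshot is that $\mathrm{div}\,\phi_{1}$ is a (nonzero constant) multiple of $\xi_{1}$. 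In fact it suffices to know that $\mathrm{div}\,\phi_{1}$ is \emph{proportional to} $\xi_{1}$: since $f$ is $(\phi_{1},J)$-paraholomorphic we have $f_{*}\xi_{1}=0$ (stated in the definition), and therefore $f_{*}(\mathrm{div}\,\phi_{1})=0$ regardless of the precise constant.

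Combining the two vanishings, \eqref{jtau} reduces to $J(\tau(f))=0$, and applying $J$ once more and using $J^{2}=\mathrm{Id}$ gives $\tau(f)=0$, i.e. $f$ is harmonic. I would also note as a sanity check that the same argument, run through \eqref{p2} in the Proposition, reproves harmonicity of $(\phi_{1},\phi_{2})$-paraholomorphic maps between para-Sasakian manifolds provided $\phi_{2}$ is suitably parallel along $f$; here the para-K\"ahler hypothesis on the target is exactly what makes $\beta'$ vanish cleanly without any such extra condition.

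The main obstacle I anticipate is purely bookkeeping rather than conceptual: correctly evaluating $\mathrm{div}\,\phi_{1}$ with the indefinite metric and the sign conventions $\varepsilon_{i}$ attached to the $\phi$-basis, so that one genuinely lands on a multiple of $\xi_{1}$ and not on some spurious component in the contact distribution. Once it is confirmed that $\mathrm{div}\,\phi_{1}\in\Gamma(\mathbb{R}\xi_{1})$ — which follows from \eqref{paras} and the structure of the $\phi$-basis — the rest is immediate from $f_{*}\xi_{1}=0$, $\nabla J=0$, and invertibility of $J$.
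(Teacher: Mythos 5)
Your proposal is correct and follows essentially the same route as the paper: specialize the identity \eqref{jtau}, kill $Tr_{g_{1}}\beta'$ via $\nabla J=0$, kill $f_{*}(\mathrm{div}\,\phi_{1})$ via $f_{*}\xi_{1}=0$, and conclude from $J^{2}=\mathrm{Id}$. You are in fact slightly more careful than the paper, which asserts $\mathrm{div}\,\phi_{1}=0$ outright, whereas the correct computation from \eqref{paras} gives $\mathrm{div}\,\phi_{1}=-2n_{1}\xi_{1}$; as you note, only proportionality to $\xi_{1}$ is needed, so the conclusion stands either way.
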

\begin{proof}
Let $\{e_{1},\cdots,e_{n_1},\phi_{1} e_{1},\cdots,\phi_{1} e_{n_1},\xi_1\}$ be a local orthonormal adapted basis on $TM_{1}^{2n_1+1}$, then from Eqs. \eqref{phixi} and \eqref{paras}, we have ${\rm div}\phi_{1}=0$ (since for a $(\phi_{1},J)$-paraholomorphic map $f_{*}\xi_{1}=0$). It follows by the use of equation \eqref{jtau} that $J(\tau(f))=0$ as $N^{2m}$ is a para-K$\ddot{\rm a}$hler manifold. Therefore, $\tau(f)=0$ and $f$ is harmonic. This completes the proof of the theorem.
\end{proof}

\noindent For $i\in\{1,2\}$, let $D_{i}$ be real distributions, respectively, on para-Sasakian manifolds $M_{i}^{2n_i+1}$ of rank $2n_{i}$ then it admits globally defined $1$-form $\eta_{i}$ such that $D_{i}\subseteq \ker\eta_{i}$. Clearly, $TM_{i}^{2n_i+1} =D_{i}\oplus\{\xi_{i}\}$, where $\{\xi_{i}\}$ is the real distribution of rank one defined by $\xi_{i}$ \cite{calin}. 

\noindent Now, we prove:  
\begin{thm}\label{hk1}
For any $(J,\phi_{1})$-paraholomorphic map $f$ between almost para-\linebreak Hermitian manifold $N^{2m}(J,h)$ and para-Sasakian manifold $M_{1}^{2n_1+1}(\phi_{1},\xi_{1},\eta_{1},g_{1})$, the tension field $\tau(f)\in \Gamma(D_{1})$. 
\end{thm}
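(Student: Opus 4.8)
The plan is to reduce the assertion to one scalar identity and then dispatch it by a short computation. By the splitting $TM_{1}^{2n_1+1}=D_{1}\oplus\{\xi_{1}\}$ recalled just above the theorem, together with the metric duality \eqref{gx}, a section $V$ of the pullback bundle $f^{-1}\big(TM_{1}^{2n_1+1}\big)$ takes values in $D_{1}$ precisely when $\eta_{1}(V)=g_{1}(V,\xi_{1})=0$. Hence it suffices to prove $g_{1}(\tau(f),\xi_{1})\equiv 0$ on $N^{2m}$, where from now on $\xi_{1}$ also denotes the section $\xi_{1}\circ f$ of $f^{-1}\big(TM_{1}^{2n_1+1}\big)$.

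First I would fix $p\in N^{2m}$ and a local orthonormal frame $\{e'_{1},\dots,e'_{m},Je'_{1},\dots,Je'_{m}\}$ around $p$, so that $\tau(f)=\sum_{i}\epsilon'_{i}\,\alpha_{f}(e'_{i},e'_{i})$ by \eqref{tauf} (with the trace taken with respect to $h$); it is then enough to check $g_{1}\big(\alpha_{f}(X,X),\xi_{1}\big)=0$ for each frame field $X$. Expanding $\alpha_{f}(X,X)=\widetilde{\nabla}_{X}f_{*}X-f_{*}\nabla_{X}X$ via \eqref{ap1}, the second term contributes $g_{1}(f_{*}\nabla_{X}X,\xi_{1})=\eta_{1}(f_{*}\nabla_{X}X)=0$, since ${\rm Im}\,f_{*}\perp\xi_{1}$ for a $(J,\phi_{1})$-paraholomorphic map; recall this property follows from $f_{*}\circ J=\phi_{1}\circ f_{*}$ applied twice, using \eqref{j2} and $\phi_{1}^{2}=Id-\eta_{1}\otimes\xi_{1}$ from \eqref{eta}.

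For the remaining term I would differentiate the function $g_{1}(f_{*}X,\xi_{1})$, which vanishes identically by the same orthogonality. Using that $\widetilde{\nabla}$ is compatible with $g_{1}$ and the para-Sasakian identity \eqref{nablax} pulled back along $f$, namely $\widetilde{\nabla}_{X}\xi_{1}=-\phi_{1}(f_{*}X)$, we get
\[
0=X\big(g_{1}(f_{*}X,\xi_{1})\big)=g_{1}\big(\widetilde{\nabla}_{X}f_{*}X,\xi_{1}\big)+g_{1}\big(f_{*}X,\widetilde{\nabla}_{X}\xi_{1}\big)=g_{1}\big(\widetilde{\nabla}_{X}f_{*}X,\xi_{1}\big)-g_{1}\big(f_{*}X,\phi_{1}f_{*}X\big).
\]
By the skew-symmetry \eqref{gphix}, $g_{1}(f_{*}X,\phi_{1}f_{*}X)=0$, hence $g_{1}(\widetilde{\nabla}_{X}f_{*}X,\xi_{1})=0$. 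Combining the two contributions gives $g_{1}(\alpha_{f}(X,X),\xi_{1})=0$ for every frame field, and therefore $g_{1}(\tau(f),\xi_{1})=0$, i.e.\ $\tau(f)\in\Gamma(D_{1})$.

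I do not expect a genuine obstacle here: the content is just that pulling $\xi_{1}$ back along $f$ and differentiating it with $\widetilde{\nabla}$ produces $-\phi_{1}f_{*}$ by the para-Sasakian relation \eqref{nablax}, while $\phi_{1}$ is $g_{1}$-skew, so both pieces of $g_{1}(\alpha_{f}(X,X),\xi_{1})$ die. The only step needing care is the bookkeeping with the pullback bundle and the compatibility of $\widetilde{\nabla}$ with $g_{1}$. An alternative would be to read off the $\xi_{1}$-component of $\tau(f)$ from the formula \eqref{ptau} of the preceding Remark, but that route seems less direct than the computation above.
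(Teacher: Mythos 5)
Your proof is correct, and it follows the same basic strategy as the paper: reduce the claim to the scalar identity $g_{1}(\tau(f),\xi_{1})=0$, and handle the term $g_{1}\big(\widetilde{\nabla}_{X}f_{*}X,\xi_{1}\big)$ by differentiating $g_{1}(f_{*}X,\xi_{1})\equiv 0$, pulling back $\nabla\xi_{1}=-\phi_{1}$ from \eqref{nablax}, and invoking the skew-symmetry \eqref{gphix} --- this is exactly how the paper arrives at its equation \eqref{gtauf}. Where you diverge is in the treatment of the remaining terms $g_{1}(f_{*}\nabla_{X}X,\xi_{1})$: the paper sums them over the frame, converts the sum via the auxiliary Proposition (identity \eqref{sni}) into $J$ applied to a vector field, pushes $J$ through $f_{*}$ to $\phi_{1}$ using paraholomorphicity, and only then kills the result with \eqref{gphix} and $\phi_{1}\xi_{1}=0$. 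You instead observe that each such term is of the form $g_{1}(f_{*}W,\xi_{1})$ and hence vanishes immediately from ${\rm Im}\,f_{*}\perp\xi_{1}$, which makes the auxiliary Proposition and the explicit use of $f_{*}\circ J=\phi_{1}\circ f_{*}$ in the final step unnecessary. Your route is shorter and, in fact, shows that the detour through \eqref{sni} is not needed for this theorem; what the paper's longer route buys is the standalone identity \eqref{sni} for almost para-Hermitian manifolds, which is stated as a separate proposition. One minor point of care, which you flag yourself: the identity $\widetilde{\nabla}_{X}(\xi_{1}\circ f)=\overline{\nabla}_{f_{*}X}\xi_{1}=-\phi_{1}(f_{*}X)$ and the metric compatibility of $\widetilde{\nabla}$ with the pulled-back metric are both standard properties of the pullback connection, so the bookkeeping goes through.
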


\noindent Before going to proof of this theorem, we first prove the following proposition:

\begin{prop}
For an almost para-Hermitian manifold $N^{2m}(J,h)$, we have
\begin{align}\label{sni}
\sum^{m}_{i=1}\Big\{\nabla_{Je'_{i}}Je'_{i}-\nabla_{e'_{i}}e'_{i}\Big\}=J\Bigg\{{\rm div}J-\sum^{m}_{i=1}\Big[e'_{i},Je'_{i}\Big]\Bigg\}
\end{align}
where $\{e'_{1},e'_{2},\cdots,e'_{m},Je'_{1},Je'_{2},\cdots,Je'_{m}\}$ is a local orthonormal frame on $TN^{2m}$.
\end{prop}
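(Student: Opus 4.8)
The plan is to obtain \eqref{sni} by expanding $J({\rm div}J)$ term by term in the given frame and then recognising the Lie brackets via torsion-freeness of $\nabla$. First I would fix notation: since $\{e'_1,\dots,e'_m,Je'_1,\dots,Je'_m\}$ is orthonormal with $h(e'_i,e'_i)=1$, it follows from \eqref{gjx} and \eqref{j2} that $h(Je'_i,Je'_i)=-h(e'_i,e'_i)=-1$, so (exactly as for ${\rm div}\phi_1$ earlier) the divergence of $J$ is
\begin{align*}
{\rm div}J&=\sum_{i=1}^{2m}\epsilon'_i(\nabla_{e'_i}J)(e'_i)=\sum_{i=1}^{m}(\nabla_{e'_i}J)(e'_i)-\sum_{i=1}^{m}(\nabla_{Je'_i}J)(Je'_i).
\end{align*}

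Next I would expand each summand through $(\nabla_X J)(Y)=\nabla_X(JY)-J(\nabla_X Y)$, which gives $(\nabla_{e'_i}J)(e'_i)=\nabla_{e'_i}(Je'_i)-J(\nabla_{e'_i}e'_i)$ and, using $J^2=Id$ from \eqref{j2} in the first term,
\begin{align*}
(\nabla_{Je'_i}J)(Je'_i)&=\nabla_{Je'_i}(J^2 e'_i)-J(\nabla_{Je'_i}(Je'_i))=\nabla_{Je'_i}e'_i-J(\nabla_{Je'_i}(Je'_i)).
\end{align*}
Substituting these, applying the bundle endomorphism $J$ to the whole expression, and using $J^2=Id$ once more to collapse the terms of the form $J^2(\nabla_{\cdot}\,\cdot)$, I would group the result (keeping the $\{e'_i\}$-contributions and the $\{Je'_i\}$-contributions separate throughout) into
\begin{align*}
J({\rm div}J)&=\sum_{i=1}^{m}\big(\nabla_{Je'_i}Je'_i-\nabla_{e'_i}e'_i\big)+\sum_{i=1}^{m}J\big(\nabla_{e'_i}(Je'_i)-\nabla_{Je'_i}e'_i\big).
\end{align*}

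Finally, since the Levi-Civita connection $\nabla$ is torsion-free, $\nabla_{e'_i}(Je'_i)-\nabla_{Je'_i}e'_i=[e'_i,Je'_i]$, so the second sum above equals $J\big(\sum_{i=1}^m[e'_i,Je'_i]\big)$; transposing it to the other side and factoring out $J$ gives precisely \eqref{sni}. The computation is essentially routine, and the only point demanding real care is the sign bookkeeping forced by the neutral signature of $h$: it is exactly because $h(Je'_i,Je'_i)=-1$ that the $\{Je'_i\}$-family enters ${\rm div}J$ with a minus sign, and this is what makes the two families combine into the bracket-free difference $\nabla_{Je'_i}Je'_i-\nabla_{e'_i}e'_i$ rather than into a sum; a secondary bookkeeping point is tracking which factors of $J$ get absorbed by $J^2=Id$ when $J$ is applied to the whole identity, which is why I would organise the expansion by the two frame families from the start.
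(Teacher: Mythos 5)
Your proof is correct and takes essentially the same route as the paper: the paper's displayed computation of ${\rm div}J$ is exactly your expansion with torsion-freeness invoked before, rather than after, applying $J$ to the whole identity, and the two calculations are otherwise identical (including the signature-induced minus sign on the $Je'_i$ terms).
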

\begin{proof}
It is straightforward to calculate 
\begin{align}\label{prop1}
{\rm div}J&=\sum^{m}_{i=1}\Big\{[e'_{i},Je'_{i}]-J(\nabla_{e'_{i}}e'_{i})+J(\nabla_{Je'_{i}}Je'_{i})\Big\}
\end{align}
and the result follows from \eqref{j2} and \eqref{prop1}. This completes the proof. 
\end{proof}
\noindent{\it Proof of Theorem \ref{hk1}.}
Since $f_{*}(X)\in \Gamma(D_{1})$, $\forall\,X\in \Gamma\big(TN^{2m}\big)$ therefore for any local orthonormal frame $\{e'_{1},e'_{2},\cdots,e'_{m},Je'_{1},Je'_{2},\cdots,Je'_{m}\}$ on $TN^{2m}$, we obtain by using Eqs. \eqref{ap1}, \eqref{tauf}, \eqref{gphix} and \eqref{nablax} that
\begin{align}\label{gtauf}
g_{1}(\tau(f),\xi_{1})=\sum^{m}_{i=1}\Big\{g_{1}(f_{*}(\nabla_{Je'_{i}}Je'_{i}),\xi_{1})-g_1(f_{*}(\nabla_{e'_{i}}e'_{i}),\xi_{1})\Big\}.
\end{align}
Employing Eq. \eqref{sni}, the above equation reduces to
\begin{align}\label{gtauf1}
g_{1}(\tau(f),\xi_{1})=g_{1}\Bigg(\phi_{1} f_{*}\Bigg({\rm{ div}}J-\sum^{m}_{i=1}J[e'_{i},Je'_{i}]\Bigg),\xi_{1}\Bigg).
\end{align}
Reusing Eq. \eqref{gphix} in \eqref{gtauf1}, we get
\begin{align*}
g_{1}(\tau(f),\xi_{1})=0,
\end{align*}
which shows that $\tau(f)\in \Gamma(D_{1})$. This completes the proof of the theorem.
\qed

\noindent By the consequence of the above theorem we can state the following result as a corollary of the theorem \ref{hk1}.
\begin{cor}
Let $N^{2m}(J,h)$ and $M_{1}^{2n_{1}+1}(\phi_{1},\xi_{1},\eta_{1},g_{1})$ be para-K$\ddot{a}$hler and para-Sasakian manifolds respectively. Then for any $(J,\phi_{1})$-paraholomorphic map $f:N^{2m}\to M_{1}^{2n_{1}+1}$, the tension field $\tau(f)\in \Gamma(D_{1})$.
\end{cor}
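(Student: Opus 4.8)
The plan is to recognise that the corollary is an immediate specialisation of Theorem~\ref{hk1}. By definition (see Sect.~\ref{prem}) a para-K\"{a}hler manifold $N^{2m}(J,h)$ is an almost para-Hermitian manifold for which additionally $\nabla J=0$; in particular it \emph{is} an almost para-Hermitian manifold. Consequently, any $(J,\phi_{1})$-paraholomorphic map $f:N^{2m}\to M_{1}^{2n_{1}+1}$ from such an $N^{2m}$ into the para-Sasakian manifold $M_{1}^{2n_{1}+1}$ satisfies exactly the hypotheses of Theorem~\ref{hk1}, and we may invoke that theorem verbatim to conclude $\tau(f)\in\Gamma(D_{1})$.

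Thus there is essentially no obstacle to overcome: the whole computation --- expressing $g_{1}(\tau(f),\xi_{1})$ through \eqref{ap1}, \eqref{tauf}, \eqref{gphix} and \eqref{nablax}, rewriting it by means of the identity \eqref{sni}, and then observing via \eqref{gphix} that it vanishes --- was already performed in the proof of Theorem~\ref{hk1}, and at no point did it use more than the almost para-Hermitian structure of the domain. Hence the proof is simply the remark that a para-K\"{a}hler manifold is almost para-Hermitian, so Theorem~\ref{hk1} applies.

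If one prefers a self-contained argument rather than a citation, I would run through the same chain of equalities leading to \eqref{gtauf1} and then feed in the extra hypothesis $\nabla J=0$ to simplify the right-hand side before the final application of \eqref{gphix}; this makes the vanishing of $g_{1}(\tau(f),\xi_{1})$ somewhat more transparent, but it yields no stronger conclusion and is not required. I would keep the proof as a direct appeal to Theorem~\ref{hk1}.
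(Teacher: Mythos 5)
Your proposal is correct and matches the paper exactly: the paper presents this corollary as an immediate consequence of Theorem~\ref{hk1}, since a para-K\"{a}hler manifold is in particular almost para-Hermitian. No further argument is given or needed.
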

\section{Parapluriharmonic map}\label{parap}
In this section we define the notion of $\phi_{1}$-parapluriharmonic map which is similar to the notion of $\phi$-pluriharmonic map between almost contact metric manifold and Riemannian manifold, for $\phi$-pluriharmonic map see : \cite{boeckx,inpas}. 
\begin{defn}
A smooth map $f$ between almost paracontact metric manifold $M_{1}^{2n_1+1}(\phi_{1},\xi_{1},\eta_{1},g_{1})$ and pseudo-Riemannian manifold $N^{m},$ is said to be $\phi_{1}$-\linebreak parapluriharmonic if 
\begin{align}\label{pluri}
\alpha_{f}(X,Y)-\alpha_{f}(\phi_{1} X,\phi_{1}Y)=0,\qquad \forall\,\,X,\,Y\in \Gamma\big(TM_{1}^{2n_1+1}\big),
\end{align}
where the second fundamental form $\alpha_f$ of $f$ is defined by \eqref{ap1}. In particular, $\alpha_{f}(X,\xi_{1})=0$ for any tangent vector $X$.
\end{defn}
\begin{prop}\label{pro1}
Any $\phi_{1}$-parapluriharmonic map $f$ between almost paracontact metric manifold $M_{1}^{2n_1+1}(\phi_{1},\xi_{1},\eta_{1},g_{1})$ and pseudo-Riemannian manifold $N^{m}$ is harmonic.
\end{prop}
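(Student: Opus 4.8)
The plan is to compute the tension field $\tau(f)=\mathrm{Tr}_{g_{1}}\alpha_{f}$ directly in a $\phi$-basis and show that the contributions cancel in pairs. First I would fix, at an arbitrary point $p\in M_{1}^{2n_{1}+1}$, a local pseudo-orthonormal $\phi$-basis $\{e_{1},\dots,e_{n_{1}},\phi_{1}e_{1},\dots,\phi_{1}e_{n_{1}},\xi_{1}\}$ as recalled in Sect.~\ref{prem}, so that $g_{1}(e_{i},e_{i})=1$, $g_{1}(\phi_{1}e_{i},\phi_{1}e_{i})=-1$ and $g_{1}(\xi_{1},\xi_{1})=1$. With respect to this basis the pseudo-Riemannian trace in \eqref{tauf} reads
\begin{align*}
\tau(f)=\sum_{i=1}^{n_{1}}\bigl(\alpha_{f}(e_{i},e_{i})-\alpha_{f}(\phi_{1}e_{i},\phi_{1}e_{i})\bigr)+\alpha_{f}(\xi_{1},\xi_{1}).
\end{align*}

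Next I would invoke $\phi_{1}$-parapluriharmonicity \eqref{pluri}: taking $X=Y=e_{i}$ gives $\alpha_{f}(e_{i},e_{i})-\alpha_{f}(\phi_{1}e_{i},\phi_{1}e_{i})=0$ for each $i$, so the sum vanishes term by term. For the remaining term, $\phi_{1}\xi_{1}=0$ by \eqref{phixi}, so setting $X=Y=\xi_{1}$ in \eqref{pluri} yields $\alpha_{f}(\xi_{1},\xi_{1})=\alpha_{f}(\phi_{1}\xi_{1},\phi_{1}\xi_{1})=\alpha_{f}(0,0)=0$ (equivalently, this is the special case $\alpha_{f}(X,\xi_{1})=0$ noted just after the definition). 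Hence $\tau(f)=0$ at $p$; since $p$ is arbitrary and the vanishing is independent of the chosen frame, $f$ is harmonic.

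The argument is essentially bookkeeping; the only point deserving care is the sign pattern in the pseudo-Riemannian trace — the $n_{1}$ time-like frame vectors $\phi_{1}e_{i}$ enter with a minus sign — which is exactly what makes the \emph{difference} $\alpha_{f}(X,Y)-\alpha_{f}(\phi_{1}X,\phi_{1}Y)$, rather than a sum, the relevant combination. I do not anticipate a genuine obstacle; if anything one should confirm that \eqref{pluri} is imposed for all vector fields (and not merely on $\ker\eta_{1}$), but the stated definition makes this explicit, and it is also what allows the $\xi_{1}$-term to be disposed of.
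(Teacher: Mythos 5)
Your proof is correct and follows essentially the same route as the paper: evaluate the signed trace of $\alpha_{f}$ in a $\phi$-basis, use \eqref{pluri} with $X=Y=e_{i}$ to cancel the paired space-like/time-like contributions, and kill $\alpha_{f}(\xi_{1},\xi_{1})$ via $\phi_{1}\xi_{1}=0$. Your remark about the sign pattern in the pseudo-Riemannian trace being what makes the difference (rather than the sum) the right combination is exactly the point, though the paper leaves it implicit.
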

\begin{proof}
Let $\{e_{1},\cdots,e_{n_1},\phi_{1} e_{1},\cdots,\phi_{1} e_{n_1},\xi_1\}$ be a local orthonormal frame on $TM_{1}^{2n_1+1}$, then by definition of $\phi_{1}$-parapluriharmonicity, we have
\begin{align*}
\alpha_{f}(\xi_{1},\xi_{1})=0 \quad {\rm{and}} \quad \alpha_{f}(e_{i},e_{i})-\alpha_{f}(\phi_{1} e_{i},\phi_{1} e_{i})=0,
\end{align*}
for $i\in \{1,2,\cdots,n\}$. Therefore, $\tau(f)=Tr_{g_{1}}\alpha_{f}=0$.
This completes the proof.
\end{proof}
\begin{thm}
Let $f$ be a smooth $(\phi_{1},J)$-paraholomorphic map between normal almost paracontact metric 3-manifold $M_{1}^{3}(\phi_{1},\xi_{1},\eta_{1},g_{1})$ and para-K$\ddot{a}$hler manifold $N^{2}(J,h)$. Then $f$ is harmonic. 
\end{thm}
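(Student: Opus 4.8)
The plan is to reduce this statement to Theorem \ref{t1} by showing that a normal almost paracontact metric $3$-manifold which carries a $(\phi_1,J)$-paraholomorphic map to a para-K\"ahler surface behaves, as far as the computation of the tension field is concerned, exactly like a para-Sasakian manifold. The key observation is that Theorem \ref{t1} really only used two facts about $M_1$: that $f_*\xi_1 = 0$ for a $(\phi_1,J)$-paraholomorphic map (which is automatic from the definition, independent of any normality or para-Sasakian assumption), and that ${\rm div}\,\phi_1 = 0$ when computed in a $\phi_1$-adapted orthonormal frame. So the real content is to verify ${\rm div}\,\phi_1 = 0$ on a normal almost paracontact metric $3$-manifold, or at least that $f_*({\rm div}\,\phi_1) = 0$.

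First I would invoke the proposition of \cite{jwleg} quoted above: normality of $M_1^3$ gives smooth functions $p,q$ with $(\nabla_X\phi_1)Y = q(g_1(X,Y)\xi_1 - \eta_1(Y)X) + p(g_1(\phi_1 X,Y)\xi_1 - \eta_1(Y)\phi_1 X)$. Then I would compute ${\rm div}\,\phi_1 = \sum_i \epsilon_i (\nabla_{E_i}\phi_1)E_i$ over a $\phi_1$-basis $\{e_1, \phi_1 e_1, \xi_1\}$ with signs $\epsilon_1 = 1$, $\epsilon_2 = -1$ (time-like), $\epsilon_3 = 1$. Plugging the frame vectors into the normality formula and summing, the $\xi_1$-components involve $q(g_1(E_i,E_i) - \eta_1(E_i)^2)$ and $p\,g_1(\phi_1 E_i, E_i)$ summed with signs; since $g_1(\phi_1 X, X) = 0$ by \eqref{gphix}, the $p$-term with coefficient $\xi_1$ drops, and the $q$-term collapses because $\sum_i \epsilon_i(g_1(E_i,E_i) - \eta_1(E_i)^2)$ telescopes to zero over the $\phi_1$-basis. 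The remaining pieces are proportional to $X$ and $\phi_1 X$ evaluated at $\eta_1(E_i)$, which vanish on $e_1$ and $\phi_1 e_1$ and leave only a $\xi_1$-direction contribution; but then $f_*\xi_1 = 0$ kills it under $f_*$. Hence $f_*({\rm div}\,\phi_1) = 0$.

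Next I would apply formula \eqref{jtau} from the Remark, namely $J(\tau(f)) = f_*{\rm div}\,\phi_1 - Tr_{g_1}\beta'$ with $\beta'(X,Y) = (\widetilde\nabla_X J)(f_*Y)$. Since $N^2$ is para-K\"ahler, $\nabla J = 0$, so its pullback $\widetilde\nabla J$ vanishes and $Tr_{g_1}\beta' = 0$. Combined with the previous step this yields $J(\tau(f)) = 0$, and applying $J$ once more and using \eqref{j2} ($J^2 = Id$) gives $\tau(f) = 0$, i.e.\ $f$ is harmonic. Strictly, I should double check that \eqref{jtau} was derived only from the $(\phi_1,J)$-paraholomorphy relation $f_* \circ \phi_1 = J \circ f_*$ and the trace identity, not from the para-Sasakian condition \eqref{paras}; examining the proof of the corresponding proposition, that derivation is purely formal and goes through verbatim, so the Remark's formula is available here.

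The main obstacle, and the only genuine computation, is the divergence identity $f_*({\rm div}\,\phi_1) = 0$ on the normal $3$-manifold: one must be careful with the indefinite signature, keeping the signs $\epsilon_i$ correct in the trace over the $\phi_1$-basis, and one must track that the only surviving term after the $q$- and $p$-cancellations points along $\xi_1$, so that applying $f_*$ annihilates it. Everything else is an immediate consequence of para-K\"ahlerity and the earlier results. An alternative, perhaps cleaner, route is to note that a normal almost paracontact metric $3$-manifold with the given map is in fact para-Sasakian after a pointwise check ($p = 0$ since $f_*\xi_1 = 0$ forces constraints via $\nabla_X\xi_1$), which would let one quote Theorem \ref{t1} directly; but the divergence computation above avoids having to establish that identification and is the more robust plan.
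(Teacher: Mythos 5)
Your argument is correct, but it follows a genuinely different route from the paper's. The paper does not go through ${\rm div}\,\phi_1$ at all: it starts from the pointwise identity \eqref{phi2a} specialized to the $(\phi_1,J)$ setting, uses normality to write $f_*(\nabla_X\phi_1)Y=-\{qf_*X+pf_*\phi_1X\}\eta_1(Y)$, substitutes $Y\mapsto\phi_1Y$ to kill the $\eta_1$-term, and concludes that $f$ is $\phi_1$-parapluriharmonic; harmonicity then follows from Proposition \ref{pro1}. Your traced version via \eqref{jtau} is more economical and your observation that \eqref{jtau} is derived purely formally from $f_*\circ\phi_1=J\circ f_*$ (so it applies beyond the para-Sasakian hypothesis of the Remark) is exactly the point that makes the reduction legitimate; the trade-off is that you obtain only $\tau(f)=0$, whereas the paper's pointwise computation yields the stronger conclusion of $\phi_1$-parapluriharmonicity. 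One detail in your divergence computation is off: the $q$-contribution to the $\xi_1$-component does \emph{not} telescope to zero --- with $\epsilon_i=g_1(E_i,E_i)$ one has $\sum_i\epsilon_i\,q\,g_1(E_i,E_i)\xi_1=3q\xi_1$ against only $-q\xi_1$ from the $-q\eta_1(E_i)E_i$ terms, so in fact ${\rm div}\,\phi_1=2q\xi_1$ (consistent with the para-Sasakian case $q=-1$, where ${\rm div}\,\phi_1=-2\xi_1$). This does not damage your proof, since the surviving term is proportional to $\xi_1$ and your fallback claim $f_*({\rm div}\,\phi_1)=2q\,f_*\xi_1=0$ is what the argument actually needs; but the intermediate assertion that the $\xi_1$-component itself vanishes should be corrected.
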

\begin{proof}
We recall that $f_{*}\xi_{1}=0$ for a $(\phi_{1},J)$-paraholomorphic map and $N^2$ is para-K$\ddot{\rm{a}}$hler, and that from Eq. \eqref{nablaxphiy} for any vectors $X, Y$ tangent to $M_{1}^3$, we have  
\begin{align}\label{bla}
f_{*}(\nabla_{X}\phi_{1})Y=-\{qf_{*}X+p f_{*}\phi_{1} X\}\eta_{1}(Y).
\end{align}
Using equation \eqref{phi2a} for a given map, we obtain
\begin{align}\label{ja}
J(\alpha_{f}(X,Y))=-\{q f_{*}X+p f_{*}\phi_{1} X\}\eta_{1}(Y)+\alpha_{f}(X,\phi_{1} Y).
\end{align}
Replacing $Y$ by $\phi_{1} Y$ and employing Eqs. \eqref{eta} and \eqref{phixi}, the above equation reduces to
\begin{align*}
J(\alpha_{f}(X,\phi_{1} Y))=\alpha_{f}(X,Y).
\end{align*}
By the virtue of the fact that $\alpha_{f}$ is symmetric, we obtain from above equation that
\begin{align*}
\alpha_{f}(X,Y)-\alpha_{f}(\phi_{1} X,\phi_{1} Y)=0.
\end{align*}
The above expresion implies that $f$ is $\phi_{1}$-parapluriharmonic and thus harmonic from the proposition \ref{pro1}. This completes the proof of the theorem.
\end{proof}

\noindent As an immediate consequence of above theorem and remark $2.4$ of \cite{ks1} one easily gets the following corollary:
\begin{cor}
Let $M_{1}^{3}(\phi_{1},\xi_{1},\eta_{1},g_{1})$ be a normal almost paracontact metric 3-manifold with $p,q=$constant, $N^{2}(J,h)$ be a para-K$\ddot{a}$hler manifold and $f:M_{1}^{3}\to N^{2}$ be a smooth $(\phi_{1},J)$-paraholomorphic map. Then $M_{1}^{3}$ is paracosymplectic manifold.
\end{cor}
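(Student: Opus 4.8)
The plan is to feed the conclusion of the preceding theorem --- that $f$ is $\phi_{1}$-parapluriharmonic --- into the normality formula \eqref{nablaxxi} for $\nabla\xi_{1}$, and read off $p=q=0$. Putting $Y=\xi_{1}$ in the parapluriharmonicity condition \eqref{pluri} and using $\phi_{1}\xi_{1}=0$ from \eqref{phixi} gives $\alpha_{f}(X,\xi_{1})=0$ for every $X\in\Gamma\big(TM_{1}^{3}\big)$. On the other hand, since $f$ is $(\phi_{1},J)$-paraholomorphic we have $f_{*}\xi_{1}=0$, hence $\widetilde{\nabla}_{X}(f_{*}\xi_{1})=0$, and the definition \eqref{ap1} of the second fundamental form reduces to $\alpha_{f}(X,\xi_{1})=-f_{*}(\nabla_{X}\xi_{1})$. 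Comparing the two expressions yields $f_{*}(\nabla_{X}\xi_{1})=0$, that is, $\nabla_{X}\xi_{1}\in\ker f_{*}$ for all $X$. (The relation $p\,f_{*}X+q\,J(f_{*}X)=0$ one gets from \eqref{ja} with $Y=\xi_{1}$ is an equivalent reformulation and a convenient consistency check.)

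Next I would use non-degeneracy of $f$. On a nonempty open set $U$ on which $f$ is a submersion one has $\ker f_{*}=\mathrm{span}\{\xi_{1}\}$, so $\nabla_{X}\xi_{1}$ is proportional to $\xi_{1}$ on $U$; but $g_{1}(\nabla_{X}\xi_{1},\xi_{1})=\frac{1}{2}X\big(g_{1}(\xi_{1},\xi_{1})\big)=0$, since $g_{1}(\xi_{1},\xi_{1})=\eta_{1}(\xi_{1})=1$ by \eqref{gx} and \eqref{eta}, so in fact $\nabla_{X}\xi_{1}=0$ on $U$. Plugging this into \eqref{nablaxxi} and evaluating on nonzero vectors $X^{+}$, $X^{-}$ in the (rank-one) $\pm1$-eigenbundles of $\phi_{1}$ inside $\ker\eta_{1}$ gives $(p+q)X^{+}=0$ and $(p-q)X^{-}=0$, hence $p=q=0$ on $U$.

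Finally, since $p$ and $q$ are constant on $M_{1}^{3}$, we conclude $p\equiv q\equiv0$ everywhere, and therefore $M_{1}^{3}$ is paracosymplectic by the characterization of paracosymplectic structures among normal almost paracontact metric $3$-manifolds recalled at the end of Section \ref{prem} (this last step being where Remark~2.4 of \cite{ks1} enters). The delicate point is the appeal to non-degeneracy: were $\mathrm{Im}\,f_{*}$ everywhere contained in a single $J$-eigenline, the computation would deliver only $p+q=0$, so one really needs $f$ to have rank two at some point --- together with the constancy of $p,q$ --- to obtain the full statement.
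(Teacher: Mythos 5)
The paper does not actually prove this corollary --- it is declared an ``immediate consequence'' of the preceding theorem and of Remark 2.4 of \cite{ks1}, with no computation shown --- so your reconstruction supplies the missing argument, and it is the natural one. The chain parapluriharmonicity $\Rightarrow\alpha_{f}(X,\xi_{1})=0$, combined with $f_{*}\xi_{1}=0$ and \eqref{nablaxxi}, gives $f_{*}(\nabla_{X}\xi_{1})=p\,f_{*}X+q\,Jf_{*}X=0$, and both of your ways of finishing (the kernel argument on an open set where $f$ submerses, or direct evaluation of \eqref{nablaxxi} on the $\pm1$-eigenvectors of $\phi_{1}$ in $\ker\eta_{1}$) are sound. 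The caveat you flag is a genuine defect of the statement rather than of your proof: no nondegeneracy of $f$ is assumed in the corollary, and a constant map is trivially $(\phi_{1},J)$-paraholomorphic, so for instance a para-Sasakian $3$-manifold ($p=0$, $q=-1$, both constant) mapped to a point would be a counterexample; moreover at rank-one points ${\rm Im}\,f_{*}$ is necessarily a $J$-eigenline (being $J$-invariant and one-dimensional), so one extracts only $p+q=0$ or $p-q=0$, exactly as you observe. It is also worth noting that your identity $\alpha_{f}(X,\xi_{1})=-\left(p\,f_{*}X+q\,Jf_{*}X\right)$ exposes a related issue in the preceding theorem: the step ``$J(\alpha_{f}(X,\phi_{1}Y))=\alpha_{f}(X,Y)$'' there silently discards a term $-\eta_{1}(Y)\alpha_{f}(X,\xi_{1})$, so the claimed parapluriharmonicity is equivalent to the very relation from which the corollary is then deduced, and only the harmonicity of $f$ is unconditional. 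In short: your proof is correct wherever $f$ attains rank two at some point, and the missing rank hypothesis is an omission in the paper's statement, not a flaw in your argument.
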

\noindent Here, we derive the necessary and sufficient condition for a $(\phi_{1},\phi_{2})$-paraholomorphic map between para-Sasakian manifolds to be $\phi_{1}$-parapluriharmonic.
\begin{thm}\label{main}
Let $f:M_{1}^{2n_{1}+1}\to M_{2}^{2n_{2}+1}$ be a $(\phi_{1},\phi_{2})$-paraholomorphic map between para-Sasakian manifolds $M_{i}^{2n_{i}+1}(\phi_{i},\xi_{i},\eta_{i},g_{i})$, $i\in\{1,2\}$. Then $f$ is $\phi_{1}$-parapluriharmonic if and only if $\xi_{2}\in\left(Im\,f_{*}\right)^{\perp}$.
\end{thm}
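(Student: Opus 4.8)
Throughout write $\overline\nabla$ for the Levi‑Civita connection of $M_2^{2n_2+1}$, so that $\widetilde\nabla_X=\overline\nabla_{f_*X}$.

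The plan is to turn the fundamental identity \eqref{phi2a} into an explicit formula for $\alpha_f(X,Y)-\alpha_f(\phi_1 X,\phi_1 Y)$ and to see that its vanishing is governed by the single tensor $\eta_2\circ f_*$. Two preliminary remarks. Since $f$ is $(\phi_1,\phi_2)$-paraholomorphic, $\phi_2(f_*\xi_1)=f_*(\phi_1\xi_1)=0$, so by \eqref{phixi} $f_*\xi_1$ is pointwise a multiple of $\xi_2$; and $\eta_2(f_*\phi_1 Y)=\eta_2(\phi_2 f_*Y)=0$, again by \eqref{phixi}. Moreover, by the defining property of the pull-back connection and \eqref{paras} applied to $M_2^{2n_2+1}$,
\[
\big(\widetilde\nabla_X\phi_2\big)(f_*Y)=\big(\overline\nabla_{f_*X}\phi_2\big)(f_*Y)=-g_2(f_*X,f_*Y)\xi_2+\eta_2(f_*Y)f_*X .
\]

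Feeding this and \eqref{paras} for $M_1^{2n_1+1}$ into \eqref{phi2a} yields
\[
\alpha_f(X,\phi_1 Y)=\phi_2\big(\alpha_f(X,Y)\big)+\big(\eta_2(f_*Y)-\eta_1(Y)\big)f_*X+\big(g_1(X,Y)f_*\xi_1-g_2(f_*X,f_*Y)\xi_2\big).
\]
From here I would (i) replace $Y$ by $\phi_1 Y$, using $\phi_1^2=\mathrm{Id}-\eta_1\otimes\xi_1$, $\eta_1\circ\phi_1=0$, $\eta_2\circ\phi_2=0$ and $\phi_2\circ f_*=f_*\circ\phi_1$; (ii) apply $\phi_2$ a second time, so that $\phi_2^2$ acts and the terms $\phi_2\xi_2=0$, $\phi_2 f_*\xi_1=0$ drop out; and (iii) symmetrise in $X,Y$, using that $\alpha_f$ is symmetric and \eqref{gphix} for $g_1$ and $g_2$. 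The $\xi_2$-direction and $f_*\xi_1$-direction contributions then cancel in pairs, leaving an identity of the shape
\[
\alpha_f(X,Y)-\alpha_f(\phi_1 X,\phi_1 Y)=\eta_2(f_*X)\,f_*\phi_1 Y+\big(\text{corrections supported along }\xi_1\big),
\]
where the corrections are built from $\eta_1(X),\,\eta_1(Y)$ and from $\alpha_f(\cdot,\xi_1)$; by \eqref{nablax} the latter equals $\widetilde\nabla_X(f_*\xi_1)-f_*\nabla_X\xi_1$ and is explicitly computable once the scalar representing $f_*\xi_1$ along $\xi_2$ is pinned down.

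From this the equivalence is read off. Restricting $X,Y$ to the contact distribution $\ker\eta_1=\mathrm{Im}\,\phi_1$ kills the $\xi_1$-corrections, so $f$ is $\phi_1$-parapluriharmonic precisely when $\eta_2(f_*X)\,f_*\phi_1 Y=0$ for all such $X,Y$, together with the ``in particular'' requirement $\alpha_f(X,\xi_1)=0$ of the definition. Since $\phi_1$ maps onto $\ker\eta_1$, the first condition is equivalent to $\eta_2(f_*X)=g_2(f_*X,\xi_2)=0$ for every $X$, i.e.\ to $\xi_2\in(\mathrm{Im}\,f_*)^{\perp}$; conversely, when $\xi_2\in(\mathrm{Im}\,f_*)^{\perp}$ one gets $f_*\xi_1=0$ and $\eta_2\circ f_*\equiv 0$, and substituting these back makes every term of the closed formula vanish, so $f$ is $\phi_1$-parapluriharmonic. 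Hence $f$ is $\phi_1$-parapluriharmonic if and only if $\xi_2\in(\mathrm{Im}\,f_*)^{\perp}$.

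The step I expect to be the genuine obstacle is the Reeb-direction bookkeeping in (i)–(iii): because $\phi_1^2$ is only $\mathrm{Id}-\eta_1\otimes\xi_1$ rather than $\mathrm{Id}$, the substitution $Y\mapsto\phi_1 Y$ throws up the term $\eta_1(Y)\,\alpha_f(X,\xi_1)$, which must be evaluated — together with the multiple of $\xi_2$ represented by $f_*\xi_1$ — and then shown to be compatible with, indeed governed by, the single condition $\xi_2\perp\mathrm{Im}\,f_*$. This is precisely where the para-Sasakian hypothesis on \emph{both} factors, and not merely a weaker paracontact one, is used in full.
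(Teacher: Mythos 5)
Your proposal follows essentially the same route as the paper: both feed the para-Sasakian identity \eqref{paras} on each factor into \eqref{phi2a}, use $f_{*}\xi_{1}=\lambda\xi_{2}$, exploit the symmetry of $\alpha_{f}$ and the substitution $Y\mapsto\phi_{1}Y$, and arrive at $\alpha_{f}(X,Y)-\alpha_{f}(\phi_{1}X,\phi_{1}Y)=-\eta_{2}(f_{*}X)\,\phi_{2}(f_{*}Y)$, from which the equivalence with $\xi_{2}\in(\mathrm{Im}\,f_{*})^{\perp}$ is read off. The Reeb-direction bookkeeping you single out as the main obstacle is precisely what the paper sidesteps by taking $X,Y\in\Gamma(D_{1})$ from the outset --- exactly your ``restrict to the contact distribution'' step --- so no additional idea is needed beyond what you have written.
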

\begin{proof}
Since $f$ is a $(\phi_{1},\phi_{2})$-paraholomorphic map then for all $x\in M_{1}^{2n_{1}+1}$ there exists a function $\lambda$ on $M_{1}^{2n_{1}+1}$ such that 
\begin{align}\label{feta} \left(f_{*}\xi_{1}\right)_{f(x)}=\lambda(x)\left(\xi_{2}\right)_{f(x)}\quad{\rm and}\quad \left(f^{*}\eta_{2}\right)_{x}=\lambda(x)\left(\eta_{1}\right)_{x}.
\end{align}
For any $X,Y\in\Gamma\left(D_{1}\right)$, we have from Eqs. \eqref{ap1}, \eqref{paras} and \eqref{feta} that 
\begin{align*}
\alpha_{f}(X,\phi_{1}Y)=\phi_{2}\alpha_{f}(X,Y)+\eta_{2}(f_{*}X)f_{*}Y-g_{2}(f_{*}X,f_{*}Y)\xi_{2}+\lambda g_{1}(X,Y)\xi_{2}.
\end{align*}
From above equation and the fact that $\alpha_{f}$ is symmetric, we obtain that 
\begin{align}
\alpha_{f}(X,\phi_{1}Y)-\alpha_{f}(\phi_{1}X, Y)=\eta_{2}(f_{*}Y)f_{*}X-\eta_{2}(f_{*}X)f_{*}Y.
\end{align}
Replacing $Y$ by $\phi_{1}Y$ in above expression and using Eqs. \eqref{eta} and \eqref{phixi}, we find
\begin{align}
\alpha_{f}(X,Y)-\alpha_{f}(\phi_{1}X, \phi_{1}Y)=-\eta_{2}(f_{*}X)\phi_{2}(f_{*}Y).
\end{align} 
This implies that $\alpha_{f}(X,Y)-\alpha_{f}(\phi_{1}X, \phi_{1}Y)=0$ if and only if $\xi_{2}\in\left(Im\,f_{*}\right)^{\perp}$.  This completes the proof of the theorem.
\end{proof}
\noindent Now, we present an example for illustrating theorem \ref{main}:
\subsection{Example}
Let $M_{i}^3\subset\RR^3,\ i\in\{1,2\}$ be $3$-dimensional manifolds with standard Cartesian coordinates. Define the almost paracontact structures $\left(\phi_{i},\xi_{i},\eta_{i},g_{i}\right)$ respectively on $M_{i}^3$ by
\begin{align}
&\phi_{1}e_{1}=-e_{2}+x^2e_{3},\ \phi_{1}e_{2}=-e_{1},\ \phi_{1}e_{3}=0,\ \xi_{1}=e_{3},\ \eta_{1}=x^2dy+dz,\\
&\phi_{2}e_{1}'=-e_{2}',\ \phi_{2}e_{2}'=-e_{1}'+v^2e_{3}',\ \phi_{2}e_{3}'=0,\ \xi_{2}=e_{3}',\ \eta_{2}=-v^2du+dw,
\end{align}
where $e_1=\frac{\partial}{\partial x}$, $e_2=\frac{\partial}{\partial y}$, $e_3=\frac{\partial}{\partial z}$, $e_1'=\frac{\partial}{\partial u}$, $e_2'=\frac{\partial}{\partial v}$ and $e_3'=\frac{\partial}{\partial w}$. By direct calculations, one verifies that the Nijenhuis torsion of $\phi_i$ for $i\in\{1,2\}$ vanishes, which implies that the structures are normal. Let the pseudo-Riemannian metrics $g_i,\ i\in\{1,2\}$ are  prescribed respectively on $M_{i}^3$ by
\begin{align}
\left[g_1\left(e_s,e_t\right)\right]=\begin{bmatrix}-x&0&0\\0&x^4+x&x^2\\0&x^2&1\end{bmatrix},\ \left[g_2\left(e_s',e_t'\right)\right]=\begin{bmatrix}v^4+v&0&v^2\\0&-v&0\\v^2&0&1\end{bmatrix},
\end{align}
for all $s,t\in\{1,2,3\}$. For the Levi-Civita connections $\nabla, \overline\nabla$ with respect to metrics $g_1, g_2$ respectively, we obtain
\begin{align*}
&\nabla_{{e_1}}{e_1}=\frac{1}{2x}e_1,\, \nabla_{{e_1}}{e_2}=\frac{2x^3+1}{2x}e_2+\left(\frac{x}{2}-x^4\right){e_{3}}=\nabla_{{e_2}}{e_1},\, \nabla_{{e_2}}{e_2}=\frac{4x^3+1}{2x}e_1,\\ 
&\nabla_{{e_1}}{e_3}={e_2}-x^2{e_3}=\nabla_{{e_3}}{e_1}, \, \nabla_{{e_2}}{e_3}={e_1}=\nabla_{{e_3}}{e_2}, \, \nabla_{{e_3}}{e_3}=0,\end{align*}
\begin{align*}
&\overline\nabla_{{e_1}'}{e_1}'=\frac{4v^3+1}{2v}e_2',\, \overline\nabla_{{e_1}'}{e_2}'=\frac{2v^3+1}{2v}e_1'+\left(\frac{v}{2}-v^4\right){e_{3}}'=\overline\nabla_{{e_2}'}{e_1}',\, \overline\nabla_{{e_3}'}{e_3}'=0,\\ 
&\overline\nabla_{{e_2}'}{e_3}'={e_1}'-v^2e_3'=\overline\nabla_{{e_3}'}{e_2}',\, \overline\nabla_{{e_2}}'{e_2}'=\frac{1}{2v}e_2', \, \overline\nabla_{{e_3}'}{e_1}'={e_2}'=\overline\nabla_{{e_1}'}{e_3}'.  \end{align*}
From above expressions and equation \eqref{nablaxxi}, we find $p=0$, $q=-1$. Hence the $M_{1}^3$ and $M_{2}^3$ are para-Saakian manifolds with invariant distributions $D_1={\rm span}\{{e_1},\phi_{1}e_{1}\}$ and $D_2={\rm span}\{{e_2}',\phi_{2}e_{2}'\}$ respectively. Let $f:M_{1}^3\to M_{2}^3$ be a mapping defined by $f(x,y,z)=(y,x,z)$. Then $f_{*}\circ \phi_{1}=\phi_{2}\circ f_{*}$, {\emph i.e.}, $f$ is a $(\phi_{1},\phi_{2})$-paraholomorphic map between para-Sasakian manifolds. For any $X,Y\in\Gamma({D_1})$ and $x\in M_{1}^3$, it is not hard to see that $\alpha_{f}(X,Y)=\alpha_{f}(\phi_{1}X, \phi_{1}Y)$, $\lambda(x)=1$ and $g_2(\xi_{2},f_{*}X)=0.$ Thus theorem \ref{main} is verified.

\end{document}